\newtheorem{theorem}{Theorem}
\newtheorem{lemma}{Lemma}
\newtheorem{definition}{Definition}
\newtheorem{corollary}{Corollary}
\newtheorem{remark}{Remark}
\numberwithin{equation}{subsection}
\newcommand{\nub}{{\boldsymbol{\nu}}}
\newcommand{\cx}{{}{\scriptstyle{\mathcal X}}}
\begin{document}
\author{R. Duduchava, E. Shargorodsky, and G. Tephnadze}
\title[ unit normal vector field]{ Extension of the unit normal vector field
from a hypersurface}
\address{R. Duduchava, A.Razmadze Mathematical Institute Academy of Sciences
of Georgia 1, M.Alexidze str. Tbilisi 93, Georgia }
\email{duduch@rmi.acnet.ge}
\address{E. Shargorodsky, Department of Mathematics, King's College London,
Strand, London WC2R 2LS, United Kingdom.}
\email{eugene.shargorodsky.AT.kcl.ac.uk}
\address{G. Tephnadze, Department of Mathematics, Faculty of Exact and
Natural Sciences, Tbilisi State University, Chavchavadze str. 1, Tbilisi
0128, Georgia}
\email{giorgitephnadze@gmail.com}
\thanks{The research was supported by Shota Rustaveli National Science
Foundation grant no. 13/14.}
\date{}
\maketitle

\begin{abstract}
It is important in many applications to be able to extend the (outer) unit normal
vector field from a hypersurface to its neighborhood in such a way that the result is a unit gradient field. The aim of the paper is to provide an elementary proof of the existence and uniqueness of such an extension.
\end{abstract}

\textbf{2010 Mathematics Subject Classification.} 53A05, 14J70, 14Q10.

\textbf{Key words and phrases:} Hypersurface, Unit normal vector field,
extended normal vector fields, proper extension

\section*{Introduction}

\bigskip It is important in many applications to be able to extend the (outer) unit normal
vector field ${\boldsymbol{\nu }}$ from a hypersurface $\mathcal{S}$ to a neighborhood
of \ $\mathcal{S}$ in such a way that the result is
a unit gradient field (see, e.g., \cite{DMM1}--\cite{MM1} and the references therein).  We call such
extensions proper.
\begin{definition}
\label{d2.4.i} Let ${\mathcal{S}}$ be a hypersurface in $\mathbb{R}^{n}$ and ${%
\boldsymbol{\nu }}$ be the unit normal vector field on ${\mathcal{S}}$. A
vector filed $\mathcal{N}\in C^1(\Omega_\mathcal{S})$ in a neighborhood
$\Omega_\mathcal{S}$ of $\mathcal{S}$ is referred to as a
\textbf{proper extension} if $\mathcal{N}\big|_\mathcal{S}=\boldsymbol{\nu}$ and
\begin{equation}\label{e2.2.19x}
|\mathcal{N}(x)|=1,\qquad \partial _{j}\mathcal{N}_{k}(x)
     =\partial _{k}\mathcal{N}_{j}(x)\qquad \text{for all}\quad x\in
     \Omega_\mathcal{S},\quad j,k=1,\ldots,n.
\end{equation}
\end{definition}

The existence of such an extension follows from the well known existence and uniqueness result for the following boundary value problem for the \textbf{eikonal equation}: for a given hypersurface $\mathcal{S}$, find a function $\Phi_\mathcal{S}$ such that
\begin{equation}\label{e0.1}
 \begin{array}{l}
|\nabla\,\Phi_\mathcal{S}(x)|=1\qquad \forall\, x\in\Omega_\mathcal{S},\\[3mm]
\Phi_\mathcal{S}(\cx)=0\quad \text{and}\quad\nabla\,\Phi_\mathcal{S}(\cx)=\nub(\cx)
     \qquad \text{for}\quad \cx\in\mathcal{S},
 \end{array}
\end{equation}
where $\nub(\cx)$, $\cx\in\mathcal{S}$ is the unit normal vector field on
the hypersurface $\mathcal{S}$  (see, e.g., \cite[page 88-89]{MF1}). Indeed,
if $\Phi_\mathcal{S}$ is a solution to the problem \eqref{e0.1}, the gradient
$\mathcal{N}(x)=\nabla\,\Phi_\mathcal{S}(x)$, $x\in\Omega_\mathcal{S}$, is
a proper extension of the unit normal vector field $\nub(\cx)$,
$\cx\in\mathcal{S}$.

The aim of this paper is to present an elementary proof of the existence and uniqueness result for the proper extension problem and for \eqref{e0.1}, which does not rely on the theory of Hamilton-Jacobi equations, and to provide a streamlined presentations of some results discussed in \cite{DMM1}--\cite{duka}.

The paper is organized as follows. In Section 1, we recall some definitions and
introduce basic notation from the theory of hypersurfaces.
In Section 2, we present some useful properties of a proper extension of a unit
normal vector field to a hypersurface. The main result of the
paper is proved in Section 3.

\section{Hypersurfaces and their normal vectors}

\begin{definition}
\label{dA2.1.4} A subset ${\mathcal{S}}\subset \mathbb{R}^{n}$ of the
Euclidean space is called a \textbf{hypersurface} if there exist an open covering
${\mathcal{S}}=\bigcup_{j=1}^{M}{\mathcal{S}}_{j}$ and coordinate mappings
\begin{eqnarray}
\Theta _{j}\; &:&\;\omega _{j}\rightarrow {\mathcal{S}}_{j}:=\Theta
_{j}(\omega _{j})\subset \mathbb{R}^{n},\qquad \omega _{j}\subset \mathbb{R}^{n-1} \
\mbox{ are open and bounded},  \label{e1.1} \\
j &=&1,\ldots ,M,  \notag
\end{eqnarray}%
such that the corresponding differentials
\begin{equation}
D\Theta _{j}(p):=\mathrm{matr}\,[\partial _{1}\Theta _{j}(p),\ldots
,\partial _{n-1}\Theta _{j}(p)]\,,  \label{eA2.1.15}
\end{equation}%
have the full rank
\begin{equation*}
\mathrm{rank}\,D\Theta _{j}(p)=n-1,\qquad \forall p\in \omega _{j},\qquad
j=1,\ldots ,M,
\end{equation*}%
i.e., $\Theta _{j}$ are regular over the domains $\omega _{j}$ for all $j=1,\ldots ,M$.

The hypersurface is called \textbf{smooth} (or \textbf{$k$-smooth}) if the corresponding coordinate
diffeomorphisms $\Theta _{j}$ in \eqref{e1.1} are $C^{\infty }$-smooth ($k$-smooth respectively).
\end{definition}

\begin{remark}\label{r2} 
Defining the smoothness of a manifold one needs to consider transition maps like of the atlas $\Theta^{-1}_i\circ\Theta_j\;:\;\omega_i\cap\omega_j\rightarrow \omega_i\cap\omega_j$ when defining a general manifold that is not assumed a priori to be embedded into a Euclidean space. The ambient Euclidean space allows one to define hypersurfaces without re-course to transition maps. The latter can be proved to have the necessary smoothness with the help of the rank condition and the implicit function theorem.

A closed hypersurface (without boundary) in $\mathbb{R}^n$ is orientable. An elementary proof of this can be found in \cite{Sa1}.
\end{remark}

\begin{definition}
\label{dA2.1.4a} Let $k\geqslant 1$ and $\Omega \subset \mathbb{R}^{n}$ be a
bounded domain. An \textbf{implicit $C^{k}$-smooth}  hypersurface in $\mathbb{R}^{n}$ is defined as the set
\begin{equation}
{\mathcal{S}}=\left\{ t\in \Omega \;:\;\Psi _{{\mathcal{S}}}(t)=0\,\right\} ,
\label{eA2.1.14a}
\end{equation}%
where $\Psi _{{\mathcal{S}}}\,:\,\Omega \rightarrow \mathbb{R}$ is a $C^{k}$-mapping
which is regular $\nabla \,\Psi(t)\not=0$, $\forall t \in \Omega$.
\end{definition}

\begin{lemma}[{(see, e.g., \protect\cite[§\thinspace 1]{Du1})}]
\label{lA2.1.7} Definition \ref{dA2.1.4} and
Definition \ref{dA2.1.4a} of a $k$-smooth hypersurface ${\mathcal{S}}$ are
equivalent.
\end{lemma}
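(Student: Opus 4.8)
The plan is to prove the equivalence in both directions, using the implicit function theorem as the main tool in each case.

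\textbf{From Definition \ref{dA2.1.4a} to Definition \ref{dA2.1.4}.} Suppose ${\mathcal{S}}=\{t\in\Omega\,:\,\Psi_{\mathcal{S}}(t)=0\}$ with $\Psi_{\mathcal{S}}\in C^k$ and $\nabla\Psi_{\mathcal{S}}(t)\ne 0$ on $\Omega$. Fix a point $p_0\in{\mathcal{S}}$; after relabelling coordinates we may assume $\partial_n\Psi_{\mathcal{S}}(p_0)\ne 0$. By the implicit function theorem there is a neighbourhood of $p_0$ in which ${\mathcal{S}}$ is the graph $t_n=\psi(t_1,\ldots,t_{n-1})$ of a $C^k$ function $\psi$ defined on an open bounded set $\omega\subset\mathbb{R}^{n-1}$. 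Setting $\Theta(p):=(p,\psi(p))$ for $p\in\omega$ gives a $C^k$ coordinate map whose differential $D\Theta(p)=[\,e_1+\partial_1\psi\,e_n,\ldots,e_{n-1}+\partial_{n-1}\psi\,e_n\,]$ visibly has rank $n-1$ (the first $n-1$ rows form the identity). Covering ${\mathcal{S}}$ by finitely many such graph neighbourhoods — possible if ${\mathcal{S}}$ is, say, the image of a compact set or is assumed to admit a finite such cover, as in Definition \ref{dA2.1.4} — produces the required atlas $\{(\omega_j,\Theta_j)\}_{j=1}^M$, and the smoothness class is inherited from that of $\Psi_{\mathcal{S}}$.

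\textbf{From Definition \ref{dA2.1.4} to Definition \ref{dA2.1.4a}.} Conversely, suppose ${\mathcal{S}}=\bigcup_{j=1}^M\Theta_j(\omega_j)$ with each $\Theta_j\in C^k$ regular. Again fix $p_0\in{\mathcal{S}}$, say $p_0=\Theta_j(q_0)$; since $\mathrm{rank}\,D\Theta_j(q_0)=n-1$, some $(n-1)\times(n-1)$ minor is nonzero, so after permuting coordinates in $\mathbb{R}^n$ the map $q\mapsto(\Theta_{j,1}(q),\ldots,\Theta_{j,n-1}(q))$ is a $C^k$ diffeomorphism of a neighbourhood of $q_0$ onto an open set, with $C^k$ inverse $g$. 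Then locally ${\mathcal{S}}$ is the graph $t_n=\Theta_{j,n}(g(t_1,\ldots,t_{n-1}))=:h(t_1,\ldots,t_{n-1})$, and the function $\Psi(t):=t_n-h(t_1,\ldots,t_{n-1})$ is $C^k$ with $\partial_n\Psi\equiv 1\ne 0$, cutting out ${\mathcal{S}}$ near $p_0$. The remaining work — and the main obstacle — is to \emph{globalize}: one must patch these local defining functions $\Psi^{(\alpha)}$ into a single $\Psi_{\mathcal{S}}$ on one common bounded domain $\Omega\supset{\mathcal{S}}$ with $\nabla\Psi_{\mathcal{S}}\ne 0$ throughout. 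This requires (i) orientability of ${\mathcal{S}}$, so the local graph functions can be chosen with consistently signed normals (invoked via Remark \ref{r2}), and (ii) a partition-of-unity or signed-distance-function argument on a tubular neighbourhood of ${\mathcal{S}}$; the cleanest route is to take $\Psi_{\mathcal{S}}$ to be a smoothed signed distance to ${\mathcal{S}}$, which is $C^k$ near ${\mathcal{S}}$ and has unit gradient there. Since the lemma is quoted from \cite{Du1}, I would at this point simply refer to that source for the technical patching, having exhibited the key local constructions via the implicit function theorem.
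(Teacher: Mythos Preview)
The paper does not actually prove this lemma: it is stated with the parenthetical citation ``(see, e.g., \cite[\S 1]{Du1})'' and no proof is supplied. In that sense your proposal already goes well beyond what the paper offers, since you sketch both directions via the implicit function theorem and correctly isolate the globalization step (passing from local defining functions to a single $\Psi_{\mathcal{S}}$ on a common domain, which needs orientability and a tubular-neighbourhood or signed-distance construction) as the nontrivial part, before deferring to \cite{Du1} exactly as the paper does.

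Your local arguments are standard and correct. The only point worth flagging is the finiteness of the cover: Definition~\ref{dA2.1.4} demands a \emph{finite} atlas $\{\Theta_j\}_{j=1}^M$, which your implicit-to-parametric direction obtains only under a compactness-type assumption; you note this, but it is a genuine discrepancy between the two definitions as stated rather than a gap in your argument. Since the paper itself treats the lemma as a quoted result, your write-up is entirely adequate for the purpose.
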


\begin{remark}
\label{r2.5.c} For a given hypersurface, an implicit surface function is
defined with the help of the signed distance
\begin{equation*}
\Psi _{{\mathcal{S}}}(x):=\pm \mathrm{dist}(x,{\mathcal{S}}), \qquad x\in
\Omega _{{\mathcal{S}}}\,,
\end{equation*}%
where the signs \textquotedblleft +" and \textquotedblleft --" are chosen
for $x$ \textquotedblleft above" in the direction of the unit normal vector
and \textquotedblleft below" ${\mathcal{S}}$, respectively (see \cite[§\,3]%
{DMM1}).
\end{remark}

We will need the following textbook result.
\begin{lemma}
\label{l0.1} Let ${\mathcal{S}}\subset \mathbb{R}^{n}$ be a $k$-smooth
hypersurface, $k=2,3,\ldots $, given implicitly $\Psi _{{\mathcal{S}}}({%
\scriptstyle{\mathcal{X}}})=0$ by the function $\Psi _{{\mathcal{S}}}\in
C^{k}(\Omega _{{\mathcal{S}}})$.

The $C^{k-1}$-smooth unit vector field
\begin{equation}\label{e1.59}
\boldsymbol{\nu}(\cx):=\frac{(\nabla\Psi_{\mathcal{S}})(\cx)}{
    |(\nabla\Psi_\mathcal{S})(\cx)|},\qquad {\scriptstyle{\mathcal{X}}}
    \in\mathcal{S}
\end{equation}
is normal (orthogonal) to the surface $\mathcal{S}$.
\end{lemma}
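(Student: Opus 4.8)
The plan is to show two things: first, that the vector field $\boldsymbol{\nu}$ defined by \eqref{e1.59} is $C^{k-1}$-smooth and has unit length; second, that it is orthogonal to $\mathcal{S}$ at every point. The smoothness and unit-length claims are essentially immediate: since $\Psi_{\mathcal{S}}\in C^k(\Omega_{\mathcal{S}})$, its gradient $\nabla\Psi_{\mathcal{S}}$ is a $C^{k-1}$ vector-valued function, and by the regularity hypothesis $\nabla\Psi_{\mathcal{S}}(t)\neq 0$ for all $t\in\Omega_{\mathcal{S}}$, so $|\nabla\Psi_{\mathcal{S}}|$ is a $C^{k-1}$ function that is bounded away from zero on a neighborhood of each point of $\mathcal{S}$; hence the quotient in \eqref{e1.59} is $C^{k-1}$, and dividing by the norm makes $|\boldsymbol{\nu}(\cx)|=1$ by construction.

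The substantive point is orthogonality. I would argue as follows. Fix $\cx_0\in\mathcal{S}$ and let $v$ be any tangent vector to $\mathcal{S}$ at $\cx_0$. By definition of the tangent space, there is a $C^1$ curve $\gamma:(-\ve,\ve)\to\mathcal{S}$ with $\gamma(0)=\cx_0$ and $\dot\gamma(0)=v$. Since the image of $\gamma$ lies in $\mathcal{S}$, we have $\Psi_{\mathcal{S}}(\gamma(s))=0$ for all $s\in(-\ve,\ve)$. Differentiating this identity at $s=0$ by the chain rule gives
\begin{equation*}
0=\frac{d}{ds}\Big|_{s=0}\Psi_{\mathcal{S}}(\gamma(s))
 =\big\langle(\nabla\Psi_{\mathcal{S}})(\cx_0),\dot\gamma(0)\big\rangle
 =\big\langle(\nabla\Psi_{\mathcal{S}})(\cx_0),v\big\rangle .
\end{equation*}
Since $\boldsymbol{\nu}(\cx_0)$ is a positive scalar multiple of $(\nabla\Psi_{\mathcal{S}})(\cx_0)$, it follows that $\langle\boldsymbol{\nu}(\cx_0),v\rangle=0$. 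As $v$ was an arbitrary tangent vector, $\boldsymbol{\nu}(\cx_0)$ is orthogonal to the tangent space $T_{\cx_0}\mathcal{S}$, i.e. normal to $\mathcal{S}$ at $\cx_0$; and $\cx_0$ was arbitrary.

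To make the curve argument rigorous one needs to know that tangent vectors are realized by curves in $\mathcal{S}$; this is where the equivalence in Lemma \ref{lA2.1.7} (or directly the local parametrizations $\Theta_j$ from Definition \ref{dA2.1.4}) is used: near $\cx_0=\Theta_j(p)$ the surface is the image of a regular $C^k$ map, the tangent space is spanned by $\partial_1\Theta_j(p),\dots,\partial_{n-1}\Theta_j(p)$, and for each such basis vector $\partial_i\Theta_j(p)$ one takes the coordinate curve $s\mapsto\Theta_j(p+s e_i)$. The only mild obstacle is thus bookkeeping between the two equivalent descriptions of a hypersurface; the analytic content — one application of the chain rule to $\Psi_{\mathcal{S}}\circ\gamma\equiv 0$ — is entirely routine.
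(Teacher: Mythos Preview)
Your argument is correct and is precisely the standard textbook proof: smoothness and unit length are immediate from $\Psi_{\mathcal{S}}\in C^k$ and $\nabla\Psi_{\mathcal{S}}\neq 0$, and orthogonality follows from differentiating $\Psi_{\mathcal{S}}\circ\gamma\equiv 0$ along curves in $\mathcal{S}$, with the local parametrizations $\Theta_j$ supplying such curves for a spanning set of tangent vectors.

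There is nothing to compare against here: the paper does not prove this lemma at all. It is introduced with the phrase ``We will need the following textbook result'' and simply stated without proof. Your write-up is exactly the kind of routine verification the authors chose to omit.
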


\section{Properties of a proper extension}

First note that the extension
\begin{equation}\label{ea333}
{\boldsymbol{\nu }}(x):=\frac{(\nabla \Psi _{\mathcal{S}})({x})}{|(\nabla
\Psi _{\mathcal{S}})({x})|}\, ,\qquad x\in \Omega _{\mathcal{S}}
\end{equation}
of the normal vector field ${\boldsymbol{\nu }}(x)$ (see \eqref{e1.59}) is
not in general a proper one. Indeed, let $n=2$ and let ${\mathcal{S}}$ be the ellipse
\begin{equation*}
\left\{ x=\left( x_{1},x_{2}\right) \in \mathbb{R}^{2}|\Psi _{\mathcal{S}}({x%
}_{1},x_{2}):=x_{1}^{2}+2x_{2}^{2}-1=0\right\} .
\end{equation*}

Then
\begin{eqnarray*}
\mathcal{N}(x):=\frac{(\nabla \Psi _{\mathcal{S}})({x})}{|(\nabla \Psi _{\mathcal{S}})({x})|}
=\frac{(x_{1},2x_{2})}{\sqrt{x_{1}^{2}+4x_{2}^{2}}}\, ,\\
\partial _{1}\mathcal{N}_{2}(x)=-\frac{2x_{2}\ x_{1}}{\left(x_{1}^{2}+4x_{2}^{2}\right) ^{3/2}}\, ,\\
\partial _{2}\mathcal{N}_{1}(x)=-\frac{4x_{1}x_{2}}{\left(x_{1}^{2}+4x_{2}^{2}\right) ^{3/2}}\, .
\end{eqnarray*}
Hence $\partial _{1}\mathcal{N}_{2}(x)\neq \partial _{2}\mathcal{N}_{1}(x)$
unless $x_{1}=0$ or $x_{2}=0.$
\begin{lemma}\label{l4.1}
Gunter's derivatives
\begin{equation}\label{defGunt}
\mathcal{D}_{k}:=\partial_k-\nu_k\partial_\nub
\end{equation}
satisfy the following equalities:
\begin{equation}\label{e4.2}
\mathcal{D}_{k}\nu _{j}(\cx)=\mathcal{D}_{j}\nu _{k}(\cx)\quad \text{for all}
     \quad\cx\in \mathcal{S},\quad j,\,k=1,\,2,\dots ,n.
\end{equation}
\end{lemma}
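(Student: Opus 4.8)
The plan is to work intrinsically on $\mathcal{S}$ and exploit the fact that $\boldsymbol{\nu}$ is a \emph{unit} vector field, so $\sum_j \nu_j^2 \equiv 1$ on $\mathcal{S}$, hence $\sum_j \nu_j \partial_k \nu_j = 0$ on $\mathcal{S}$ for every $k$. The operators $\mathcal{D}_k = \partial_k - \nu_k \partial_{\nub}$ are tangential, so $\mathcal{D}_k f$ at a point $\cx \in \mathcal{S}$ depends only on the values of $f$ on $\mathcal{S}$; this is what makes the statement meaningful even though a priori $\nu_j$ is only defined on $\mathcal{S}$. I would first fix a convenient local representation: using the implicit surface function from Lemma~\ref{l0.1}, extend $\boldsymbol{\nu}$ near $\cx$ by $\nu_j = \partial_j \Psi_\mathcal{S} / |\nabla \Psi_\mathcal{S}|$ (this is \eqref{ea333}), so the symbols $\partial_k \nu_j$ on the right-hand side of \eqref{e4.2} make classical sense; one must then check that the combination $\mathcal{D}_k \nu_j$ is independent of the chosen extension, which follows because $\mathcal{D}_k$ annihilates any function vanishing on $\mathcal{S}$ together with its restriction — equivalently, because $\mathcal{D}_k g = \partial_k g - \nu_k \partial_\nub g$ kills the normal component of $\nabla g$ and any two extensions of $\boldsymbol{\nu}$ agree on $\mathcal{S}$.

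The computational core is then short. Write $\partial_\nub = \sum_\ell \nu_\ell \partial_\ell$. Then
\begin{align*}
\mathcal{D}_k \nu_j - \mathcal{D}_j \nu_k
&= \bigl(\partial_k \nu_j - \partial_j \nu_k\bigr)
   - \nu_k \sum_\ell \nu_\ell \partial_\ell \nu_j
   + \nu_j \sum_\ell \nu_\ell \partial_\ell \nu_k .
\end{align*}
For the extension \eqref{ea333}, $\partial_k \nu_j - \partial_j \nu_k = 0$ need \emph{not} hold off $\mathcal{S}$ (the ellipse example shows this), so that term must be handled on $\mathcal{S}$ itself. Here I would use that $\boldsymbol{\nu}$ restricted to $\mathcal{S}$ is the Gauss map and that $\nabla \Psi_\mathcal{S}$ is itself a gradient: a cleaner route is to use the signed-distance function $\Psi_\mathcal{S} = \pm\,\mathrm{dist}(\cdot,\mathcal{S})$ from Remark~\ref{r2.5.c}, for which $|\nabla \Psi_\mathcal{S}| \equiv 1$ in a neighborhood, hence $\nu_j = \partial_j \Psi_\mathcal{S}$ and $\partial_k \nu_j = \partial_k \partial_j \Psi_\mathcal{S} = \partial_j \partial_k \Psi_\mathcal{S} = \partial_j \nu_k$ \emph{identically} near $\mathcal{S}$, not merely on it. With that choice the first bracket vanishes outright, and it remains to show $\nu_k \sum_\ell \nu_\ell \partial_\ell \nu_j = \nu_j \sum_\ell \nu_\ell \partial_\ell \nu_k$. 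But with $\nu_j = \partial_j \Psi_\mathcal{S}$ and $|\nabla\Psi_\mathcal{S}| = 1$, differentiating $\sum_\ell \nu_\ell^2 = 1$ gives $\sum_\ell \nu_\ell \partial_j \nu_\ell = 0$, and by symmetry of second derivatives $\sum_\ell \nu_\ell \partial_\ell \nu_j = \sum_\ell \nu_\ell \partial_j \nu_\ell = 0$; thus both terms vanish separately and \eqref{e4.2} follows.

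The main obstacle is the \emph{well-definedness} issue: since the left-hand side of \eqref{e4.2} is asserted at points of $\mathcal{S}$ but written using ambient partial derivatives $\partial_k \nu_j$, one must be careful that $\mathcal{D}_k \nu_j\big|_\mathcal{S}$ does not depend on how $\boldsymbol{\nu}$ is extended off $\mathcal{S}$. The signed-distance extension resolves this cleanly and makes the identity transparent; alternatively one can give a one-line argument that $\mathcal{D}_k$ is a tangential operator and therefore $\mathcal{D}_k(\nu_j)\big|_\mathcal{S}$ depends only on $\nu_j\big|_\mathcal{S} = \nu_j$, after which any convenient extension — in particular the signed-distance one — may be used to verify \eqref{e4.2}. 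I would present the tangentiality remark first, then carry out the short computation above with the signed-distance function, noting that $|\nabla\Psi_\mathcal{S}| \equiv 1$ is precisely what collapses the cross terms.
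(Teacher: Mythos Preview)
Your argument is correct, but it is not the paper's primary proof; it is essentially the ``alternative proof'' the paper alludes to at the end of its demonstration. The paper's main proof works with an \emph{arbitrary} implicit function $\Psi_\mathcal{S}$ (no assumption that $|\nabla\Psi_\mathcal{S}|=1$), expands $\mathcal{D}_j\nu_k$ by brute force using $\nu_k=\partial_k\Psi_\mathcal{S}/|\nabla\Psi_\mathcal{S}|$, and observes that the resulting four-term expression is symmetric in $j$ and $k$. Your route instead selects the signed-distance function so that $|\nabla\Psi_\mathcal{S}|\equiv 1$, whence $\nu_j=\partial_j\Psi_\mathcal{S}$ is already a gradient, $\partial_k\nu_j=\partial_j\nu_k$ by equality of mixed partials, and $\partial_\nub\nu_j=0$ by the $|\nub|=1$ identity---so the result drops out with no real computation.

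What each approach buys: your argument is much shorter and conceptually transparent, but it leans on the fact that the signed distance satisfies $|\nabla\Psi_\mathcal{S}|\equiv 1$ near $\mathcal{S}$, i.e.\ on the existence of a proper extension. That is precisely the content of Theorem~\ref{t4.1}, which appears \emph{after} this lemma; Remark~\ref{r2.5.c} only names the signed distance as an implicit function and does not establish the eikonal property. There is no circularity (Theorem~\ref{t4.1} does not invoke Lemma~\ref{l4.1}), but within the paper's linear development your proof is a forward reference, whereas the paper's direct computation is self-contained at this point. Your tangentiality remark---that $\mathcal{D}_k\nu_j\big|_\mathcal{S}$ is independent of the extension---is a useful addition the paper leaves implicit.
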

\begin{proof}
Since (see \eqref{e1.59})
\begin{equation*}
\nu_k:=\frac{\partial_k\Psi_\mathcal{S}}{|(\nabla\Psi_\mathcal{S})|}\, ,
\end{equation*}
a routine calculation gives
\begin{eqnarray*}
\mathcal{D}_j\nu_k&\hskip-3mm=&\hskip-3mm\partial_j\nu_k
    -\nu_j\partial_{\boldsymbol{\nu}}\nu_k=
    \partial_j\frac{\partial_k\Psi_\mathcal{S}}{|(\nabla\Psi_\mathcal{S})|}
    -\frac{\partial_j\Psi_\mathcal{S}}{|(\nabla\Psi_\mathcal{S})|}
    \sum_{m=1}^{n}\frac{\partial_m\Psi_\mathcal{S}}{|(\nabla\Psi_\mathcal{S})|}
    \partial_m\frac{\partial_k\Psi_\mathcal{S}}{|(\nabla\Psi_\mathcal{S})|}\\
&\hskip-3mm=&\hskip-3mm\frac{\partial_j\partial_k\Psi_\mathcal{S}}{|
    (\nabla \Psi_{\mathcal{S}})|}-\sum_{\ell=1}^{n}\frac{\partial_k\Psi_\mathcal{S}
    \partial_\ell\Psi_\mathcal{S}\partial_\ell\partial_j\Psi _{\mathcal{S}}}{|
    \nabla\Psi_\mathcal{S}|^3}-\sum_{m=1}^{n}\frac{\partial_j\Psi_\mathcal{S}
    \partial_m\Psi_\mathcal{S}\partial_m\partial_k\Psi_\mathcal{S}}{
    |(\nabla\Psi_{\mathcal{S}})|^3}\\
&&+\sum_{m=1}^{n}\sum_{l=1}^{n}\frac{\partial _{j}\Psi _{\mathcal{S}}\partial_{m}
     \Psi_{\mathcal{S}}\partial_{k}\Psi_{\mathcal{S}}\partial_{l}
     \Psi_{\mathcal{S}}\partial _{m}\partial _{l}\Psi _{\mathcal{S}}}{|(\nabla
     \Psi_\mathcal{S})|^{5}}=\mathcal{D}_{k}\nu_j
     \quad \text{for all}\quad j,\,k=1,\,2,\dots,n.
\end{eqnarray*}
The last equality holds because the expression $\mathcal{D}_j\nu_k$ turns out  to be symmetric with respect to the
indices $j$ and $k$.

One can give an alternative proof that avoids the above calculations, if one assumes the existence of a proper extension of ${\boldsymbol{\nu}}$
to a neighborhood $\Omega_\mathcal{S}$ of $\mathcal{S}$ (see the proof of \eqref{e2.5} in Lemma \ref{lemma:N} below).
\end{proof}
 %
\begin{lemma}
\label{l6.19} For a unitary (not necessarily proper) extension $\mathcal{N}(x)\in C^1(\Omega_\mathcal{S})$, $|\mathcal{N}(x)|\equiv1$
of ${\boldsymbol{\nu}}(t)$
to a neighborhood $\Omega_\mathcal{S}$ of $\mathcal{S}$, the following conditions
are equivalent:

\begin{enumerate}
\item[(i)] $\partial _{\mathcal{N}}\mathcal{N}|_{{\mathcal{S}}}=0$,

\item[(ii)] $\left[ \partial _{k}\mathcal{N}_{j}-\partial _{j}\mathcal{N}_{k}%
\right] |_{\mathcal{S}}=0\;\;$ for $\;\;k,\,j=1,\,2,\dots ,n$.
\end{enumerate}
\end{lemma}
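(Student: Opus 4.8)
The plan is to exploit the two pieces of structure available for a unitary extension: differentiating the constraint $|\mathcal{N}(x)|^2=1$, and the identity \eqref{e4.2} of Lemma \ref{l4.1} restricted to $\mathcal{S}$, where $\mathcal{N}|_\mathcal{S}=\nub$. First I would record the basic consequence of unitarity: from $\sum_j \mathcal{N}_j(x)^2\equiv 1$ on $\Omega_\mathcal{S}$ we get $\sum_j \mathcal{N}_j\,\partial_k\mathcal{N}_j\equiv 0$ for each $k$, which on $\mathcal{S}$ reads $\sum_j \nu_j\,\partial_k\mathcal{N}_j=0$, i.e. $(\partial_{\mathcal{N}}\mathcal{N})_k := \sum_j \mathcal{N}_j\partial_j\mathcal{N}_k$ — wait, I must be careful which index is summed. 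With the convention $\partial_{\mathcal{N}} = \sum_j \mathcal{N}_j\partial_j$, condition (i) says $\sum_j \mathcal{N}_j\,\partial_j\mathcal{N}_k=0$ on $\mathcal{S}$ for every $k$, whereas unitarity gives $\sum_j \mathcal{N}_j\,\partial_k\mathcal{N}_j=0$ on $\mathcal{S}$ for every $k$. These differ by swapping the roles of the two indices inside the derivative, so the gap between them is exactly governed by the antisymmetric tensor $A_{kj}:=\partial_k\mathcal{N}_j-\partial_j\mathcal{N}_k$ appearing in (ii).

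Concretely, the key algebraic step is the contraction
\begin{equation*}
\sum_{j=1}^n \nu_j\big(\partial_k\mathcal{N}_j-\partial_j\mathcal{N}_k\big)\Big|_{\mathcal{S}}
  = \sum_{j=1}^n \nu_j\,\partial_k\mathcal{N}_j\Big|_{\mathcal{S}} - \sum_{j=1}^n \nu_j\,\partial_j\mathcal{N}_k\Big|_{\mathcal{S}}
  = 0 - \big(\partial_{\mathcal{N}}\mathcal{N}_k\big)\Big|_{\mathcal{S}},
\end{equation*}
using unitarity for the first sum. Hence (ii) $\Rightarrow$ (i) is immediate: if every $A_{kj}$ vanishes on $\mathcal{S}$, then $\partial_{\mathcal{N}}\mathcal{N}_k|_\mathcal{S}=0$ for all $k$. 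For the converse (i) $\Rightarrow$ (ii) the same identity gives $\sum_j \nu_j A_{kj}|_\mathcal{S}=0$, i.e. the vector $A_{k\cdot}|_\mathcal{S}$ is tangential to $\mathcal{S}$ for each fixed $k$; this alone is not enough, and here is where Lemma \ref{l4.1} enters. Rewriting \eqref{e4.2} in terms of ordinary derivatives via $\mathcal{D}_k=\partial_k-\nu_k\partial_\nub$ and $\mathcal{N}|_\mathcal{S}=\nub$, the equality $\mathcal{D}_k\nu_j=\mathcal{D}_j\nu_k$ on $\mathcal{S}$ becomes $\partial_k\nu_j-\partial_j\nu_k = \nu_k\,\partial_\nub\nu_j-\nu_j\,\partial_\nub\nu_k$ on $\mathcal{S}$; but on $\mathcal{S}$ we may replace $\partial_i\nu_\ell$ by $\partial_i\mathcal{N}_\ell$ only in tangential directions — more carefully, since $\nub$ is merely defined on $\mathcal{S}$ while $\mathcal{N}$ is defined on $\Omega_\mathcal{S}$, I should instead apply Lemma \ref{l4.1} with the implicit function $\Psi_\mathcal{S}$ as in its proof, which yields $A_{kj}|_\mathcal{S}=\nu_k\,(\partial_\nub\mathcal{N}_j)|_\mathcal{S}-\nu_j\,(\partial_\nub\mathcal{N}_k)|_\mathcal{S} = \nu_k (\partial_\mathcal{N}\mathcal{N}_j)|_\mathcal{S}-\nu_j(\partial_\mathcal{N}\mathcal{N}_k)|_\mathcal{S}$.

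With that displayed formula $A_{kj}|_\mathcal{S} = \nu_k(\partial_\mathcal{N}\mathcal{N}_j)|_\mathcal{S} - \nu_j(\partial_\mathcal{N}\mathcal{N}_k)|_\mathcal{S}$ both implications collapse at once: if (i) holds, the right-hand side is zero so (ii) holds; if (ii) holds, contracting with $\nu_k$ and summing over $k$ (using $\sum_k \nu_k^2=1$ and $\sum_k \nu_k(\partial_\mathcal{N}\mathcal{N}_k)|_\mathcal{S}=0$ from unitarity) gives $(\partial_\mathcal{N}\mathcal{N}_j)|_\mathcal{S}=0$ for every $j$, which is (i). I would therefore organize the proof as: (1) derive the unitarity relations $\sum_j \mathcal{N}_j\partial_k\mathcal{N}_j=0$ and in particular $\sum_k \nu_k(\partial_\mathcal{N}\mathcal{N}_k)|_\mathcal{S}=0$; (2) establish the crucial identity for $A_{kj}|_\mathcal{S}$ by transplanting the computation in Lemma \ref{l4.1} from $\nub=\nabla\Psi_\mathcal{S}/|\nabla\Psi_\mathcal{S}|$ to the extension $\mathcal{N}$, or equivalently by combining \eqref{e4.2} with the fact that the tangential first-order jets of $\mathcal{N}$ and $\nub$ agree on $\mathcal{S}$; (3) read off both implications. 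The main obstacle is step (2): making precise the passage from the intrinsic identity \eqref{e4.2} — which only involves the surface normal — to a statement about $\partial_k\mathcal{N}_j$ for the ambient extension, since along the normal direction $\mathcal{N}$ need not coincide with any normalized gradient; the cleanest route is probably to repeat the short computation of Lemma \ref{l4.1} verbatim but allow $\partial_\nub$ applied to $\mathcal{N}$ rather than invoking $\Psi_\mathcal{S}$, so that the normal-derivative terms survive explicitly as $\partial_\mathcal{N}\mathcal{N}$.
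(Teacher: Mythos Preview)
Your proposal is correct and follows the same route as the paper: the implication (ii)$\Rightarrow$(i) via the contraction $\sum_j\mathcal{N}_j\partial_k\mathcal{N}_j=\tfrac12\,\partial_k|\mathcal{N}|^2=0$ is exactly the paper's argument, and (i)$\Rightarrow$(ii) in both cases rests on \eqref{e4.2} together with $\mathcal{D}_k\mathcal{N}_j=\mathcal{D}_k\nu_j$ on $\mathcal{S}$ (your ``tangential first-order jets agree'' remark, which already resolves the obstacle you worry about in step~(2)). The only cosmetic difference is that you package the latter as the single identity $A_{kj}|_\mathcal{S}=\nu_k(\partial_\mathcal{N}\mathcal{N}_j)|_\mathcal{S}-\nu_j(\partial_\mathcal{N}\mathcal{N}_k)|_\mathcal{S}$ and read off both directions from it, whereas the paper writes out the chain $\partial_k\mathcal{N}_j=\mathcal{D}_k\mathcal{N}_j=\mathcal{D}_k\nu_j=\mathcal{D}_j\nu_k=\mathcal{D}_j\mathcal{N}_k=\partial_j\mathcal{N}_k$ directly under assumption~(i).
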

\begin{proof}
Suppose $(i)$ holds. Then one has on $\mathcal{S}$
\begin{eqnarray*}
\partial_k \mathcal{N}_j & = & \mathcal{D}_k \mathcal{N}_j \ \ \ (\mbox{due to } (i) \mbox{ and the definition \eqref{defGunt} of } \mathcal{D}_k)  \\
& = &  \mathcal{D}_k \nu_j \ \ \ (\mbox{since } \mathcal{D}_k \mbox{ is a tangent derivative and } \mathcal{N} = \nu \mbox{ on } \mathcal{S})  \\
& = &  \mathcal{D}_j \nu_k \ \ \ (\mbox{by \eqref{e4.2}}) \\
& = &  \mathcal{D}_j \mathcal{N}_k = \partial_j \mathcal{N}_k  \ \ \ (\mbox{as above}).
\end{eqnarray*}

Suppose now $(ii)$ holds. Then  one has on $\mathcal{S}$
\begin{equation}\label{dNN}
\partial _{\mathcal{N}}\mathcal{N}_{j}=\sum_{k=1}^{n}\mathcal{N}_{k}\partial_{k}\mathcal{N}_{j}=\sum_{k=1}^{n}\mathcal{N}_{k}\partial _{j}\mathcal{N}_{k}=\frac{1}{2}\sum_{k=1}^{n}\partial _{j}\mathcal{N}_{k}^{2}=\frac{1}{2} \partial _{j}1=0 .
\end{equation}
\end{proof}

\begin{corollary}
\label{c3.3} Let $\mathcal{N}\in C^1(\Omega_\mathcal{S})$, $|\mathcal{N}(x)|\equiv1$, be a unitary (not necessarily proper) extension of
${\boldsymbol{\nu}}$ to a neighborhood $\Omega_\mathcal{S}$ of $\mathcal{S}$.

If one of conditions (i) or (ii) of Lemma \ref{l6.19} holds,  then
\begin{equation}\label{e2.3S}
\mathcal{D}_{k}\mathcal{N}_{j}(x)=\partial _{k}\mathcal{N}_{j}(x)=\partial
_{j}\mathcal{N}_{k}(x)=\mathcal{D}_{j}\mathcal{N}_{k}(x)\quad \text{for all}%
\quad x\in \mathcal{S} .
\end{equation}
\end{corollary}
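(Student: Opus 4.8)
The plan is to read everything off Lemma \ref{l6.19} together with the definition \eqref{defGunt} of Günter's derivatives; no computation is really needed. By Lemma \ref{l6.19} the conditions (i) and (ii) are equivalent, so if either of them is assumed, then both hold: we have $\partial_{\mathcal{N}}\mathcal{N}|_{\mathcal{S}}=0$ and $\left[\partial_k\mathcal{N}_j-\partial_j\mathcal{N}_k\right]|_{\mathcal{S}}=0$ for all $j,k=1,\dots,n$. The second of these is precisely the middle equality $\partial_k\mathcal{N}_j(x)=\partial_j\mathcal{N}_k(x)$ for $x\in\mathcal{S}$ asserted in \eqref{e2.3S}, so it remains only to justify the two outer equalities $\mathcal{D}_k\mathcal{N}_j=\partial_k\mathcal{N}_j$ and $\mathcal{D}_j\mathcal{N}_k=\partial_j\mathcal{N}_k$ on $\mathcal{S}$.

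For those, the key observation is that, since $\mathcal{N}|_{\mathcal{S}}=\boldsymbol{\nu}$, the two first-order operators $\partial_{\boldsymbol{\nu}}=\sum_{m=1}^{n}\nu_m\partial_m$ and $\partial_{\mathcal{N}}=\sum_{m=1}^{n}\mathcal{N}_m\partial_m$ coincide when evaluated at points of $\mathcal{S}$ (this uses only the boundary value of $\mathcal{N}$, not its derivatives). Hence condition (i) gives $\partial_{\boldsymbol{\nu}}\mathcal{N}_j|_{\mathcal{S}}=\partial_{\mathcal{N}}\mathcal{N}_j|_{\mathcal{S}}=0$ for every $j$. Substituting this into the definition \eqref{defGunt}, $\mathcal{D}_k\mathcal{N}_j=\partial_k\mathcal{N}_j-\nu_k\,\partial_{\boldsymbol{\nu}}\mathcal{N}_j$, the normal term drops out on $\mathcal{S}$, and we obtain $\mathcal{D}_k\mathcal{N}_j(x)=\partial_k\mathcal{N}_j(x)$ for $x\in\mathcal{S}$; interchanging $j$ and $k$ gives $\mathcal{D}_j\mathcal{N}_k(x)=\partial_j\mathcal{N}_k(x)$ for $x\in\mathcal{S}$ in the same way.

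Chaining the three identities $\mathcal{D}_k\mathcal{N}_j=\partial_k\mathcal{N}_j$, $\partial_k\mathcal{N}_j=\partial_j\mathcal{N}_k$, $\partial_j\mathcal{N}_k=\mathcal{D}_j\mathcal{N}_k$ on $\mathcal{S}$ then yields \eqref{e2.3S}, completing the proof. I do not expect any genuine obstacle; the only point that deserves a sentence of care is the identification of $\partial_{\mathcal{N}}$ with $\partial_{\boldsymbol{\nu}}$ at points of $\mathcal{S}$, which rests solely on the interpolation condition $\mathcal{N}\big|_{\mathcal{S}}=\boldsymbol{\nu}$.
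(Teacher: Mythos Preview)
Your proof is correct and follows the same route as the paper's own argument, which simply states that the equalities follow from conditions (i) and (ii) of Lemma~\ref{l6.19} together with the definition of Günter's derivative. You have merely spelled out the details, including the point that $\partial_{\mathcal N}$ and $\partial_{\boldsymbol{\nu}}$ agree on $\mathcal S$ because $\mathcal N|_{\mathcal S}=\boldsymbol{\nu}$.
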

\begin{proof}
The claimed equalities  follow from conditions
(i) and (ii) of Lemma \ref{l6.19} and the definition of Gunter's derivative $\mathcal{D}_{k}$.
\end{proof}

 %
\begin{lemma}\label{lemma:N}
Any proper extension $\mathcal{N}(x)$, $x\in \Omega _{{\mathcal{S}}}\subset \mathbb{R}^{n}$ of the unit
normal vector field ${\boldsymbol{\nu }}$ to the surface $\mathcal{S}\subset\Omega_\mathcal{S}$
satisfies the equality
\begin{equation}\label{e0.2}
\partial _{\mathcal{N}}\mathcal{N}(x)=0,\qquad \mbox{\rm for all}\quad x\in
\Omega _{\mathcal{S}}.
\end{equation}

Moreover, for the extensions of Gunter's derivatives $\mathcal{D}_{k}=\partial_{k} - \nu_{k}\partial _{\boldsymbol{\nu}}$ to the neighborhood $\Omega _{{\mathcal{S}}}$ of the surface ${\mathcal{S}}$
\begin{equation}\label{e2.7}
\mathcal{D}_{k}=\partial _{k}-\mathcal{N}_{k}\partial _{\mathcal{N}},\qquad
k=1,\ldots ,n,
\end{equation}
the following equalities hold
\begin{equation}\label{e2.3}
\mathcal{D}_{k}\mathcal{N}_{j}(x)=\partial _{k}\mathcal{N}_{j}(x)=\partial
_{j}\mathcal{N}_{k}(x)=\mathcal{D}_{j}\mathcal{N}_{k}(x)\quad \text{for all}\quad x\in \Omega _{\mathcal{S}}
\end{equation}and, in particular,
\begin{equation}\label{e2.5}
\mathcal{D}_{k}\nu _{j}({}{\scriptstyle{\mathcal{X}}})=\mathcal{D}_{j}\nu
_{k}({}{\scriptstyle{\mathcal{X}}})\quad \text{for all}\quad {}{\scriptstyle{\mathcal{X}}}\in \mathcal{S},\quad j,\,k=1,\,2,\dots ,n.
\end{equation}
\end{lemma}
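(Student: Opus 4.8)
The plan is to extract the entire statement from the two defining properties of a proper extension alone: $|\mathcal{N}|\equiv1$ on $\Omega_\mathcal{S}$ and the ``curl-free'' symmetry $\partial_{k}\mathcal{N}_{j}=\partial_{j}\mathcal{N}_{k}$ valid on all of $\Omega_\mathcal{S}$ (not merely on $\mathcal{S}$, as in Lemma \ref{l6.19}). First I would prove \eqref{e0.2} by repeating, now pointwise on $\Omega_\mathcal{S}$, the short computation \eqref{dNN} from the proof of Lemma \ref{l6.19}: for each fixed $j$ and each $x\in\Omega_\mathcal{S}$,
\begin{equation*}
\partial_{\mathcal{N}}\mathcal{N}_{j}=\sum_{k=1}^{n}\mathcal{N}_{k}\,\partial_{k}\mathcal{N}_{j}=\sum_{k=1}^{n}\mathcal{N}_{k}\,\partial_{j}\mathcal{N}_{k}=\frac{1}{2}\,\partial_{j}\sum_{k=1}^{n}\mathcal{N}_{k}^{2}=\frac{1}{2}\,\partial_{j}|\mathcal{N}|^{2}=\frac{1}{2}\,\partial_{j}\,1=0,
\end{equation*}
where the second equality uses the symmetry $\partial_{k}\mathcal{N}_{j}=\partial_{j}\mathcal{N}_{k}$ and the last two use $|\mathcal{N}|\equiv1$. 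The $C^{1}$-regularity of $\mathcal{N}$ is exactly what is needed to justify these manipulations.

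With \eqref{e0.2} in hand, \eqref{e2.3} is immediate: by the definition \eqref{e2.7} of the extended Gunter derivative and $\partial_{\mathcal{N}}\mathcal{N}_{j}\equiv0$ one has $\mathcal{D}_{k}\mathcal{N}_{j}=\partial_{k}\mathcal{N}_{j}$ throughout $\Omega_\mathcal{S}$, and then the symmetry $\partial_{k}\mathcal{N}_{j}=\partial_{j}\mathcal{N}_{k}$ together with the same identity written for $\mathcal{D}_{j}\mathcal{N}_{k}$ closes the chain. To obtain \eqref{e2.5} I would restrict \eqref{e2.3} to $\mathcal{S}$: since $\mathcal{N}\big|_\mathcal{S}=\boldsymbol{\nu}$, the operator \eqref{e2.7} restricted to $\mathcal{S}$ is precisely the original Gunter derivative \eqref{defGunt}, so $\mathcal{D}_{k}\mathcal{N}_{j}\big|_\mathcal{S}=\mathcal{D}_{k}\nu_{j}$ and likewise with $j,k$ interchanged, which yields \eqref{e2.5}. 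This is the alternative, calculation-free proof of \eqref{e4.2} promised at the end of Lemma \ref{l4.1}.

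I do not expect a genuine obstacle; the one step deserving a word of justification is the passage from \eqref{e2.3} to \eqref{e2.5}. One must observe that $\mathcal{D}_{k}=\sum_{m}(\delta_{km}-\nu_{k}\nu_{m})\partial_{m}$ is, for each $k$, differentiation along the vector $(\delta_{km}-\nu_{k}\nu_{m})_{m=1}^{n}$, which is tangent to $\mathcal{S}$ because $\sum_{m}\nu_{m}(\delta_{km}-\nu_{k}\nu_{m})=\nu_{k}-\nu_{k}|\boldsymbol{\nu}|^{2}=0$; hence $\mathcal{D}_{k}$ acting on a function defined near $\mathcal{S}$ depends, on $\mathcal{S}$, only on the trace of that function. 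Applying this to $\mathcal{N}_{j}$, whose trace on $\mathcal{S}$ is $\nu_{j}$, gives $\mathcal{D}_{k}\mathcal{N}_{j}\big|_\mathcal{S}=\mathcal{D}_{k}\nu_{j}$ unambiguously. Everything else is a direct verification using only $|\mathcal{N}|\equiv1$ and $\partial_{k}\mathcal{N}_{j}\equiv\partial_{j}\mathcal{N}_{k}$.
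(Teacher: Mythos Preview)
Your proposal is correct and follows essentially the same approach as the paper: the paper's proof simply states that \eqref{e0.2} ``is proved in exactly the same way as \eqref{dNN}'' and that \eqref{e2.3} and \eqref{e2.5} ``are a direct consequence of \eqref{e0.2}'' via $\mathcal{D}_{k}=\partial_{k}-\mathcal{N}_{k}\partial_{\mathcal{N}}$. Your write-up is a faithful expansion of this, and your explicit justification that $\mathcal{D}_{k}$ is a tangential derivative (so that $\mathcal{D}_{k}\mathcal{N}_{j}\big|_{\mathcal{S}}=\mathcal{D}_{k}\nu_{j}$ is well-defined) fills in a point the paper leaves implicit.
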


\begin{proof} Equality \eqref{e0.2} is proved in exactly the same way as \eqref{dNN}.
Since $\mathcal{D}_{k}=\partial _{k}-\mathcal{N}_{k}\partial _{\mathcal{N}},$ \eqref{e2.3} and \eqref{e2.5} are a direct
consequence of \eqref{e0.2}.
\end{proof}

\section{\protect\bigskip Existence of a proper extension}

We prove in this section that the formula
\begin{eqnarray}\label{e2.14x}
\mathcal{N}(\cx+t\nub(\cx))=\nub(\cx), \quad x=\cx+t\nub(\cx) ,
  \qquad \cx\in\mathcal{S},\quad
     -\varepsilon<t<\varepsilon
\end{eqnarray}
defines a unique proper extension $\mathcal{N}(x)$
of the  unit normal vector field $\nub(\cx)$ from the hypersurface
${\mathcal{S}}\subset \mathbb{R}^{n}$ into a neighborhood $\Omega_\mathcal{S}$
\begin{equation}\label{OmegaS}
\Omega_\mathcal{S}:=\left\{x=\cx+t\nub(\cx)\;:\;\cx\in \mathcal{S},\quad
     -\varepsilon<t<\varepsilon\right\}
\end{equation}
of $\mathcal{S}$.

 %
\begin{theorem}\label{t4.1}
Let ${\mathcal{S}}\subset \mathbb{R}^{n}$ be a hypersurface given by an implicit
function
 \[
\mathcal{S}=\left\{\cx\in\mathbb{R}^n\;:\;\Psi_\mathcal{S}(\cx)=0\right\}.
 \]
Then the function
\begin{equation}\label{e4.3}
\Phi_\mathcal{S}(\cx+t\nub(\cx)):=t, \qquad\cx+t\nub(\cx)\in\Omega_\mathcal{S}
\end{equation}
represents a unique solution to the eikonal boundary value problem \eqref{e0.1},
while its gradient
\begin{eqnarray}\label{e2.14gr}
\nabla\Phi_\mathcal{S}(\cx+t\nub(\cx))=\nub(\cx), \quad x=\cx+t\nub(\cx) ,
  \qquad \cx\in\mathcal{S},\quad
     -\varepsilon<t<\varepsilon
\end{eqnarray}
is a unique proper extension of the  the unit
normal vector field ${\boldsymbol{\nu }}$ to the surface $\mathcal{S}\subset\Omega_\mathcal{S}$.
\end{theorem}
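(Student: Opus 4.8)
The plan is to reduce everything to the fact that, after shrinking $\varepsilon>0$, the map $F(\cx,t):=\cx+t\nub(\cx)$ is a $C^{k-1}$-diffeomorphism of the collar $\mathcal{S}\times(-\varepsilon,\varepsilon)$ onto the open set $\Omega_\mathcal{S}$ of \eqref{OmegaS}. Working in a chart $\Theta_j$, the differential of $(p,t)\mapsto\Theta_j(p)+t\nub(\Theta_j(p))$ at $t=0$ is $\mathrm{matr}\,[\partial_1\Theta_j(p),\dots,\partial_{n-1}\Theta_j(p),\nub(\Theta_j(p))]$, which is non-singular because, by the rank condition of Definition \ref{dA2.1.4} and Lemma \ref{l0.1}, the columns $\partial_i\Theta_j(p)$ are linearly independent and $\nub$ is orthogonal to all of them. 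Hence $F$ is a local $C^{k-1}$-diffeomorphism near $t=0$ (still for small $t\neq0$, by continuity of the Jacobian); using the finiteness of the atlas and boundedness of the $\omega_j$ one extracts a single $\varepsilon$ for which $F$ is globally injective, so $F:\mathcal{S}\times(-\varepsilon,\varepsilon)\to\Omega_\mathcal{S}$ is a diffeomorphism. In particular each $x\in\Omega_\mathcal{S}$ has a unique representation $x=\cx+t\nub(\cx)$, so \eqref{e4.3} unambiguously defines $\Phi_\mathcal{S}=\mathrm{pr}_t\circ F^{-1}\in C^{k-1}(\Omega_\mathcal{S})$, and $\Phi_\mathcal{S}\big|_\mathcal{S}=0$ is immediate.

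Next I would compute $\nabla\Phi_\mathcal{S}$ and show it coincides with $\nub$ at the foot point, which gives \eqref{e2.14gr} and $|\nabla\Phi_\mathcal{S}|\equiv1$ in one stroke. The level set $\{\Phi_\mathcal{S}=t_0\}$ equals the parallel surface $F(\cdot,t_0)(\mathcal{S})$, a regular $C^{k-1}$-hypersurface with tangent vectors $\partial_i\Theta_j(p)+t_0\,\partial_i(\nub\circ\Theta_j)(p)$, $i=1,\dots,n-1$. Differentiating $\Phi_\mathcal{S}\equiv t_0$ along this surface shows $\nabla\Phi_\mathcal{S}(\cx+t_0\nub(\cx))$ is orthogonal to all these tangent vectors; and $\nub(\cx)$ is orthogonal to them too, since $\nub\cdot\partial_i\Theta_j=0$ and $\nub\cdot\partial_i(\nub\circ\Theta_j)=\tfrac12\partial_i|\nub\circ\Theta_j|^2=\tfrac12\partial_i 1=0$. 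Both vectors thus span the one-dimensional normal line, so $\nabla\Phi_\mathcal{S}(\cx+t_0\nub(\cx))=\lambda\nub(\cx)$; differentiating $\Phi_\mathcal{S}(\cx+t\nub(\cx))=t$ in $t$ gives $\nabla\Phi_\mathcal{S}(\cx+t_0\nub(\cx))\cdot\nub(\cx)=1$, whence $\lambda=1$. This proves $|\nabla\Phi_\mathcal{S}|\equiv1$ and $\nabla\Phi_\mathcal{S}\big|_\mathcal{S}=\nub$, i.e. $\Phi_\mathcal{S}$ solves \eqref{e0.1}; moreover $\mathcal{N}:=\nabla\Phi_\mathcal{S}=\nub\circ\pi$ with $\pi:=\mathrm{pr}_\mathcal{S}\circ F^{-1}$, so $\mathcal{N}\in C^{k-1}(\Omega_\mathcal{S})\subset C^1(\Omega_\mathcal{S})$ and $\Phi_\mathcal{S}\in C^2(\Omega_\mathcal{S})$, which makes $\partial_j\mathcal{N}_k=\partial_j\partial_k\Phi_\mathcal{S}=\partial_k\partial_j\Phi_\mathcal{S}=\partial_k\mathcal{N}_j$ automatic. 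Together with $|\mathcal{N}|\equiv1$ and $\mathcal{N}\big|_\mathcal{S}=\nub$ this is precisely Definition \ref{d2.4.i}, so $\mathcal{N}$ is a proper extension.

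For uniqueness, let $\mathcal{M}$ be any proper extension of $\nub$. By Lemma \ref{lemma:N}, $\partial_\mathcal{M}\mathcal{M}\equiv0$; hence along the integral curve $\dot\gamma(s)=\mathcal{M}(\gamma(s))$, $\gamma(0)=\cx\in\mathcal{S}$, one has $\frac{d}{ds}\mathcal{M}(\gamma(s))=(\partial_\mathcal{M}\mathcal{M})(\gamma(s))=0$, so $\mathcal{M}(\gamma(s))\equiv\nub(\cx)$ and $\gamma(s)=\cx+s\nub(\cx)$. Thus $\mathcal{M}(\cx+s\nub(\cx))=\nub(\cx)=\mathcal{N}(\cx+s\nub(\cx))$ for $|s|<\varepsilon$, and since such rays exhaust $\Omega_\mathcal{S}$ we get $\mathcal{M}\equiv\mathcal{N}$ (if $\mathcal{M}$ is only given on a smaller neighborhood, the same argument yields agreement on the intersection). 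Finally, if $\Phi$ is any solution of \eqref{e0.1}, then $\nabla\Phi$ is a proper extension, hence $\nabla\Phi=\mathcal{N}=\nabla\Phi_\mathcal{S}$, so $\Phi-\Phi_\mathcal{S}$ is locally constant and vanishes on $\mathcal{S}$, which meets every connected component of $\Omega_\mathcal{S}$; therefore $\Phi=\Phi_\mathcal{S}$. I expect the one genuinely delicate point to be the uniform choice of $\varepsilon$ making $F$ globally injective on the collar — this rests on compactness of $\mathcal{S}$ and a standard convergence/contradiction argument — while the gradient computation and the uniqueness step are routine once the tubular neighborhood is in place.
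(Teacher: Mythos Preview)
Your proposal is correct and follows essentially the same route as the paper: the uniqueness argument via integral curves of a proper extension (using $\partial_\mathcal{M}\mathcal{M}=0$ from Lemma~\ref{lemma:N}) is identical, and your existence argument---showing $\nub(\cx)$ is normal to the parallel surface $\mathcal{S}_t$ via $\nub\cdot\partial_i(\nub\circ\Theta_j)=\tfrac12\partial_i|\nub|^2=0$, then identifying the scalar $\lambda$ by the directional derivative in $t$---matches the paper's computation almost verbatim. Your explicit treatment of the tubular neighborhood diffeomorphism (well-definedness of $\Phi_\mathcal{S}$) and the regularity bookkeeping for $\mathcal{N}$ are points the paper passes over silently, so in that respect your write-up is slightly more complete.
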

\begin{proof} \underline{Uniqueness.} \ Let $\mathcal{N}$ be a proper extension of $\nub$ and let $\tau \mapsto \gamma(\tau)$ be
an integral curve of $\mathcal{N}$ starting at $x \in \mathcal{S}$. Then $\gamma(0) = x$,
$\frac{d\gamma}{d\tau}(0) = \nub(x)$, and it follows from \eqref{e0.2} that
$$
\frac{d^2\gamma}{d\tau^2}(\tau) = \frac{d\mathcal{N}(\gamma(\tau))}{d\tau}  = (\partial_{\mathcal{N}}\mathcal{N})(\gamma(\tau)) = 0 .
$$
Hence $\frac{d\gamma}{d\tau}(\tau) = \mbox{const} = \frac{d\gamma}{d\tau}(0) = \nub(x)$ and
$\gamma(\tau) = \gamma(0) + \tau \nub(x) = x + \tau \nub(x)$. Therefore, $\mathcal{N}(x + \tau\nub(x)) =
\mathcal{N}(\gamma(\tau)) = \frac{d\gamma}{d\tau}(t) = \nub(x)$, i.e. \eqref{e2.14x} holds, which proves the
uniqueness of a  proper extension of ${\boldsymbol{\nu }}$. The uniqueness of a solution of \eqref{e0.1} is now
immediate. Indeed, if $\Phi_1$ and $\Phi_2$ are solutions of \eqref{e0.1}, then $\Phi_1 - \Phi_2 = 0$ on $\mathcal{S}$,
and it follows from the uniqueness of a  proper extension that $\nabla(\Phi_1 - \Phi_2) = \nabla\Phi_1 - \nabla\Phi_2 = 0$.
So, $\Phi_1 - \Phi_2 = 0$ in $\Omega_\mathcal{S}$.

\underline{Existence.} \ Our aim here is to prove  that the gradient of the function
$\Phi_\mathcal{S}$ defined by \eqref{e4.3} is an extension we need and that \eqref{e2.14gr} holds.
Let $\mathcal{S}_t$, $t \in (-\epsilon, \epsilon)$ be the $t$-level set
of $\Phi_\mathcal{S}$:
$$
\mathcal{S}_t = \{y \in \Omega_\mathcal{S} | \ \Phi_\mathcal{S}(y) = t\} = \{x + t\nub(x) | \ x \in \mathcal{S}\}.
$$
Let us show that $\nub(x)$ is normal to $\mathcal{S}_t$ at the point $x + t\nub(x)$. Using the local coordinates \eqref{e1.1} we see
that any tangential to  $\mathcal{S}_t$ vector at $\Theta_j(p) + t\nub(\Theta_j(p))$ is a linear combination of the
vectors
$$
\partial_{p_k}\left(\Theta_j(p) +  t\nub(\Theta_j(p))\right)\, ,  \ \ \ k = 1, \dots, n - 1.
$$
Taking the scalar product with $\nub(\Theta_j(p))$ we get
\begin{eqnarray*}
&& \left\langle\partial_{p_k}\left(\Theta_j(p) +  t\nub(\Theta_j(p))\right), \nub(\Theta_j(p))\right\rangle \\
&& =
\left\langle\partial_{p_k}\Theta_j(p), \nub(\Theta_j(p))\right\rangle +
t \left\langle\partial_{p_k}\nub(\Theta_j(p)), \nub(\Theta_j(p))\right\rangle \\
&& = 0 + \frac{t}2\, \partial_{p_k} |\nub(\Theta_j(p))|^2 = \frac{t}2\, \partial_{p_k} 1 = 0 , \ \ \ k = 1, \dots, n - 1.
\end{eqnarray*}
Hence $\nub(x)$ is indeed normal to $\mathcal{S}_t$ at the point $x + t\nub(x)$. Since $\mathcal{S}_t$
is defined by the equation $\Phi_\mathcal{S}(y) = t$, the gradient $(\nabla\Phi_\mathcal{S})(x + t\nub(x))$
is normal to $\mathcal{S}_t$ at the point $x + t\nub(x)$. So, there exists $\rho = \rho(x) \in \mathbb{R}$ such
that $(\nabla\Phi_\mathcal{S})(x + t\nub(x)) = \rho \nub(x)$, and it is easy to see that $\rho \ge 0$. It is left to
prove that $\rho = 1$. Since all tangential derivatives of $\Phi_\mathcal{S}$ on $\mathcal{S}_t$ are equal to 0,
we have
\begin{eqnarray*}
 |(\nabla\Phi_\mathcal{S})(x + t\nub(x))| &=& |(\partial_\nub\Phi_\mathcal{S})(x + t\nub(x))| \\
& =  & \left|\lim_{h \to 0} \frac{\Phi_\mathcal{S}(x + (t + h)\nub(x)) - \Phi_\mathcal{S}(x + t\nub(x))}{h}\right| \\
& = & \left|\lim_{h \to 0} \frac{(t + h) - t}{h}\right| = 1 .
\end{eqnarray*}
Hence $\rho = 1$, i.e. $(\nabla\Phi_\mathcal{S})(x + t\nub(x)) = \nub(x)$, $x \in \mathcal{S}$.
\end{proof}

\end{document}